\crefname{hypothesis}{Hypothesis}{Hypotheses}
\title{Improving the stability and efficiency of high-order
  operator-splitting methods\thanks{Submitted to the editors
    DATE.\funding{This work was funded by the Natural Sciences and
      Engineering Research Council of Canada under its Discovery Grant
      Program (RGPN-2020-04467) and the National Oceanic and
      Atmospheric Administration (NOAA), awarded to the Cooperative
      Institute for Research on Hydrology (CIROH) through the NOAA
      Cooperative Agreement with The University of Alabama,
      NA22NWS4320003 [RJS].)}}}
\author{Siqi Wei\thanks{Department of Mathematics and Statistics, University of Saskatchewan, Saskatoon, SK, Canada
		(\email{siqi.wei@usask.ca}).}
\and Victoria Guenter\thanks{Department of Computer Science, University of Saskatchewan, Saskatoon, SK, Canada
  (\email{v.guenter@usask.ca}).}
\and Raymond J. Spiteri\thanks{Department of Computer Science, University of Saskatchewan, Saskatoon, SK, Canada
	(\email{spiteri@cs.usask.ca}).}
}
\theoremstyle{plain}
\newtheorem{thm}{Theorem}[section]
\newtheorem{rmk}{Remark}[section]
\begin{document}

\maketitle

\begin{abstract}
  Operator-splitting methods are widely used to solve differential
  equations, especially those that arise from multi-scale or
  multi-physics models, because a monolithic (single-method) approach
  may be inefficient or even infeasible. The most common
  operator-splitting methods are the first-order Lie--Trotter (or
  Godunov) and the second-order Strang (Strang--Marchuk) splitting
  methods. High-order splitting methods with real coefficients require
  backward-in-time integration in each operator and hence may be
  adversely impacted by instability for certain operators such as
  diffusion. However, besides the method coefficients, there are many
  other ancillary aspects to an overall operator-splitting method that
  are important but often overlooked. For example, the operator
  ordering and the choice of sub-integration methods can significantly
  affect the stability and efficiency of an operator-splitting
  method. In this paper, we investigate some design principles for the
  construction of operator-splitting methods, including minimization
  of local error measure, choice of sub-integration method,
  maximization of linear stability, and minimization of overall
  computational cost. We propose a new four-stage, third-order,
  2-split operator-splitting method with seven sub-integrations per
  step and optimized linear stability for a benchmark problem from
  cardiac electrophysiology. We then propose a general principle to
  further improve stability and efficiency of such operator-splitting
  methods by using low-order, explicit sub-integrators for unstable
  sub-integrations.  We demonstrate an almost 30\% improvement in the
  performance of methods derived from these design principles compared
  to the best-known third-order methods.
\end{abstract}

\begin{keywords}
  operator splitting, fractional-step methods, Runge--Kutta methods,
  linear stability analysis, cardiac electrophysiology simulation
\end{keywords}

\begin{MSCcodes}
65L05, 65L06, 65L20
\end{MSCcodes}


\section{Introduction}
\label{sec:beat_ruth_intro}

Operator splitting is a popular approach to solving multi-physics
problems.  This approach splits the original differential equation
into several operators, integrates each operator with an appropriate
method, and composes the solutions from the sub-integrations to obtain
a solution to the original problem. Generally speaking, any approach
that involves distinct treatments of the operators, such as additive,
partitioned, or implicit-explicit (IMEX)
methods~\cite{Yanenko1971,ars1997,kennedy2003,sandu2015,spiteri_wei_FSRK},
can be considered to be a form of operator splitting. In this paper,
we focus on compositional operator-splitting
methods~\cite{hairer2006}, also referred to as fractional-step
methods~\cite{Yanenko1971,Tyson2000,spiteri_wei_FSRK}(see
also~\cref{eq:s-stageOS} below).

A fundamental difference between fractional-step methods and
implicit-explicit (IMEX) methods or other additive methods is that in
the case of fractional-step methods, each subsystem (fractional step)
is integrated independently. The coupling between subsystems occurs
only through the initial conditions used for the sub-integrations. In
the case of additive methods, the sub-integrations have a stronger
coupling. Fractional-step methods are in fact a special case of
additive methods~\cite{spiteri_wei_FSRK}.

Consider a $2$-additively split ordinary differential equation (ODE)
\begin{equation}
	\label{eq:2splitode}
	\dv{\yub}{t} = \mathcal{F}(t,\yub) = \Fub{1}(t,\yub) + \Fub{2}(t,\yub).
\end{equation}
Let
$\displaystyle \DOp{\ell} = \sum\limits_{j} \Fub{\ell}_j(\yub)
\pdv{}{y_j} $ be the Lie derivative of $\Fub{\ell}$.  Let
$\varphi_t^{[\ell]}$ be the exact flow of
\begin{equation}
  \label{eq:subsys} \displaystyle \dv{\yub^{[\ell]}}{t}= \Fub{\ell}(t,\yub^{[\ell]}),\quad  \ell=1,2. 
\end{equation}
It can be shown that
$\varphi_t^{[\ell]} (\yub_0) = \exp(\Dt \DOp{\ell}) \text{Id}(\yub_0)$
and the exact solution of the original differential equation
\cref{eq:2splitode} is
$\Phi_{\Dt}(\yub_0) = \exp(\left(\DOp{1} + \DOp{2}\right)\Dt)
\text{Id}(\yub_0)$ \cite{hairer2006}.

A popular method to approximate $\Phi_{\Dt}(\yub_0)$ by solving the
subsystems~\cref{eq:subsys} is
\begin{equation} \label{eq:1storderapprox} \Phi_{\Dt}(\yub_0) =
  \exp(\left(\DOp{1}+\DOp{2}\right)\Dt) \text{Id}(\yub_0) \approx
  \exp(\DOp{2}\Dt) \exp(\DOp{1}\Dt) \text{Id}(\yub_0).
\end{equation}
This method can be written as
\begin{equation}
	\label{eq:LT}
	\Psi_{\Dt}^{\text{LT}} =  \varphi_{\Dt}^{[2]} \circ \varphi_{\Dt}^{[1]}
\end{equation}
and is known as the Lie--Trotter (or Godunov) method
\cite{Trotter1958,Godunov1959}.

Another highly popular method operator-splitting method is the
Strang (Strang--Marchuk)
method~\cite{Strang1968,Marchuk1971} defined as
\begin{equation}
  \label{eq:Strang}
  \Psi_{\Dt}^{\text{S}} = \varphi_{\Dt/2}^{[1]} \circ \varphi_{\Dt}^{[2]} \circ \varphi_{\Dt/2}^{[1]}.
\end{equation}

Given a set of coefficients
$\{\aaalpha{k}{\ell}\}_{k=1,2,\dots, s}^{\ell=1,2}$, we define a
general $s$-stage, 2-split fractional-step method for solving
\cref{eq:2splitode} as
\begin{equation}
  \label{eq:s-stageOS}
  \Psi_{\Dt} = \varphi_{\aaalpha{s}{2}\Dt}^{[2]} \circ
  \varphi_{\aaalpha{s}{1}\Dt}^{[1]} \circ
  \varphi_{\aaalpha{s-1}{2}\Dt}^{[2]} \circ
  \varphi_{\aaalpha{s-1}{1}\Dt}^{[1]} \circ \cdots \circ  \varphi_{\aaalpha{1}{2}\Dt}^{[2]} \circ \varphi_{\aaalpha{1}{1}\Dt}^{[1]},
\end{equation}
where a \emph{stage} is defined as a set of sub-integrations through the
complete sequence of operators, in this case $\{1,2\}$. In other
words, at each stage $k$, the subsystems \cref{eq:subsys} are
integrated over a time step $\aaalpha{k}{\ell} \Dt$, for $\ell = 1,2$,
using the solution from the previous sub-integration as the initial
condition.

The integration of fractions of the right-hand side generally
introduces a splitting error that is in addition to truncation errors
from the sub-integrations or roundoff errors. For example, the
Lie--Trotter method~\cref{eq:LT} is first-order accurate in the sense
that the splitting error is $\bigOh(\Dt)$. The Strang
method~\cref{eq:Strang} is second-order.

In general, one way to construct high-order methods is by choosing the
coefficients $\aaalpha{k}{\ell}$ satisfy order
conditions~\cite{Hansen2009,auzinger2014,auzinger2016practical}. Another
way is by composing low-order methods
\cite{yoshida1990,hairer2006,Hansen2009}.  Once a method is found for
a given order, its adjoint, i.e., the inverse map with the time
direction reversed, also has the same order~\cite[Theorem
3.1]{hairer2006}. When discussing the existence of a given method
below, we omit explicitly mentioning the adjoint as a distinct method.

Operator-splitting methods of order three or greater, however, require
backward-in-time integration for each
operator~\cite{goldman1996}. These backward integrations can be
unstable for certain types of operators, e.g., diffusion, that are not
time-reversible. This instability has undoubtedly been a deterrent to
trying high-order methods in applications for systems that are not
time-reversible. Nonetheless, stability and convergence are possible
despite the presence of some
instability~\cite{cervi2018,spiteri_wei_FSRK,Wei2020}.

There are many other ancillary considerations as well that go into the
overall design, implementation, and ultimately performance of an
operator-splitting method. For example, one must also select the
sub-integration methods. One can use the exact solution of a
subsystem~\cref{eq:subsys}, if available (and desirable), or
approximate the solution using a numerical method. In this paper, we
focus on the family of operator-splitting methods that uses
Runge--Kutta methods as sub-integrators.  These operator-splitting
methods are fractional-step Runge--Kutta (FSRK)
methods~\cite{spiteri_wei_FSRK}.  The stability function of an
$N$-split FSRK method is presented in \cite{spiteri_wei_FSRK}; here,
we present the special case of $N=2$ operators.

Let $\stabfunc{k}{\ell} (z)$ be the stability function of
the Runge--Kutta method applied to operator $\Fub{\ell}$ at stage $k$,
the stability function of the resulting FSRK method is
\begin{equation}
	\label{eq:fsrk_stab}
	R\left(z^{[1]}, z^{[2]}\right) = \prod\limits_{k=1}^s \stabfunc{k}{1} \left(\aaalpha{k}{1} z^{[1]}\right) \stabfunc{k}{2} \left(\aaalpha{k}{2} z^{[2]}\right). 
\end{equation}  
\Cref{eq:fsrk_stab} shows that the stability function of the FSRK
method is the product of the stability functions of each Runge--Kutta
method with argument, however, scaled by the operator-splitting method
coefficient $\aaalpha{k}{\ell}$. This interpretation generally holds for
$N$ operators. \Cref{eq:fsrk_stab} implies that the stability of an
FSRK method is generally affected by the ordering of the operators
because the arguments are operator-dependent, as well by the method
coefficients $\aaalpha{k}{\ell}$ themselves and the choice of
sub-integration methods.
 
Although it is clear that different operator-splitting methods perform
differently, it is less clear exactly how performance is impacted by
the choice of the method coefficients. It is shown in
\cite{spiteri2023_3split} that methods with a good balance between
accuracy and cost per step are more efficient, especially when the
splitting error is the dominant source of error.  Also when using
operator-splitting methods, it is also unclear what effect the
ordering of the operators has on the performance of the overall
method. For example, changing the ordering of the operators in the
Lie--Trotter method does not affect the linear stability according
to~\cref{eq:fsrk_stab}, nor does it generally affect the overall
computation time per step because each operator is integrated over the
same step-size $\Delta t$; however, the splitting errors are generally
different (albeit trivially in this case).  It is also straightforward
to see that changing the ordering of operators for the Strang method
can make a difference in computational efficiency if one operator is
significantly more expensive than the
other(s)~\cite{spiteri2023_3split}. For $2$-split palindromic methods,
changing the order of operators makes no difference in stability;
however, the accuracy can vary. In general, operator ordering can
significantly impact the stability, accuracy, and performance of an
operator-splitting method.


In this paper, we propose two strategies to improve the stability and
efficiency of an FSRK method: first by optimizing the linear stability
region using the method coefficients and operator ordering and second
by using inexpensive explicit methods for unstable sub-integrations to
improve both stability and computational efficiency per step while
maintaining acceptable increases in error.

The remainder of the paper is organized as follows. In
\cref{sec:beat_ruth_background}, we give an introduction to the order
conditions of an $s$-stage operator-splitting method, the local error
measure (LEM), which can be used as a metric to compare methods of the
same order, some results on methods with minimal LEM, and the linear
stability function of the FSKR method. In \cref{sec:beat_ruth_main},
we construct new operator-splitting methods by
optimizing 
the intersection of the linear stability region with the negative real
axis. 
In \cref{sec:beat_ruth_num_exp}, we use the Niederer benchmark problem
from cardiac electrophysiology~\cite{niederer2011} to compare the
performance of the newly constructed methods with other methods such
as the classical third-order Ruth method and demonstrate how the two
strategies improve the stability and efficiency of FSRK methods. In
\cref{sec:beat_ruth_conclu}, we summarize the findings of this study.


\section{Background}
\label{sec:beat_ruth_background}

The purpose of this study is to describe design principles that lead
to new high-order operator-splitting methods that outperform existing
methods.  For a given order of accuracy, a general approach to design
time-stepping methods is to minimize the leading error
term~\cite{HairerNorsettWanner1993}, which for operator-splitting
methods is through minimization of the local error
measure~\cite{auzinger2016practical}. Another approach to design
time-stepping methods is to optimize stability, e.g.,~\cite{ars1997,
  kennedy2003}. For this purpose, we specialize the methods considered
to FSRK methods. We then use linear stability analysis to maximize
stability through the ordering of the operators as well as the choice
of the method coefficients to maximize the extent along the negative
real axis of the linear stability region. Finally, we further improve
stability and computational efficiency per step through the choice of
sub-integrator.

\subsection{Order conditions of operator-splitting methods}
\label{subsec:order_cond}

The order of accuracy is an important characteristic of a numerical
method and can often be determined by satisfying a set of order
conditions.  The order conditions for operator-splitting methods can
be derived from the Baker--Campbell--Hausdorff
formula~\cite{hairer2006,blanes2024}.  The following are the order
conditions at order $p$, $p=1,2,3,4$, for the method
\cref{eq:s-stageOS} with coefficients
$\{\aaalpha{k}{\ell}\}_{k=1,2,\dots, s}^{\ell=1,2}$:

\begin{subequations}
  \label{orders123}
 	\begin{align} 
		\label{order1}
		p &= 1:  \qquad \sum\limits_{k=1}^s \alpha_k^{[1]} = 1, \quad
		\sum\limits_{k=1}^s \alpha_k^{[2]} = 1,  \\ 
		\label{order2}
		p &= 2:  \qquad \sum\limits_{i=1}^s \alpha_i^{[2]}\left(
		\sum\limits_{k=1}^ i \alpha_{k}^{[1]}\right) = \frac{1}{2},  \\ 
		\label{order3}
      p &= 3: \qquad
          \sum\limits_{i=2}^{s}
          \alpha_i^{[1]}\left(\sum\limits_{k=1}^{i-1} \alpha_k^{[2]}\right)^2
          = \frac{1}{3},  \quad
          \sum\limits_{i=1}^s \alpha_i^{[1]} \left(\sum\limits_{k=i}^s \alpha_k^{[2]}\right)^2 = \frac{1}{3}, \\
      p &= 4: \qquad 
          \sum\limits_{i=1}^{s-1}
          \alpha_i^{[2]}\left(\sum\limits_{k=i+1}^s \alpha_k^{[1]}\right)^3
          = \frac{1}{4}, \nonumber  \\
          &  \qquad \sum\limits_{i=1}^{s-1} (\alpha_i^{[2]})^2 \left(\sum\limits_{k=i+1}^s \alpha_k^{[1]}\right)^2 + 2\,\sum\limits_{i=1}^{s-2} \alpha_i^{[2]}\left(\sum\limits_{k=i+1}^{s-1} \alpha_k^{[2]} \left(\sum\limits_{l=k+1}^{s} \alpha_l^{[1]} \right)^2\right)  = \frac{1}{6}, \nonumber \\ 
          & \qquad \qquad \ \sum\limits_{i=2}^{s}
            \alpha_i^{[1]}\left(\sum\limits_{k=1}^{i-1} \alpha_k^{[2]}\right)^3
            = \frac{1}{4}. \nonumber
	\end{align}  
\end{subequations} 

As mentioned, operator-splitting methods are typically constructed by
either solving the order conditions for a given order or by
compositions of lower-order methods (and possibly their adjoints) to
get higher-order methods. For example, the second-order Strang
method~\cite{Strang1968} is a composition of the first-order
Lie--Trotter method and its adjoint over $\Dt/2$.  The third-order
Ruth method~\cite{ruth1983} is derived by solving the order conditions
for $p=1,2,3$. The coefficients of the Ruth method as presented
in~\cite{ruth1983} are given in \cref{tab:Ruthcoeff}.
\begin{table}[htbp]
  \centering
  \caption{Coefficients $\alpha_k^{[\ell]}$ for the Ruth method}
  \begin{tabular}{|c|c|c|}
    \hline 
    $k$ & $\displaystyle \alpha_k^{[1]}$ &  $\displaystyle \alpha_k^{[2]}$  \\
    \hline 
    1 & $7/24$  & $2/3$ \\
    \hline 
    2 & $3/4$  & $-2/3$\\
    \hline 
    3 & $-1/24$  & $1$ \\
    \hline 
  \end{tabular}
  \label{tab:Ruthcoeff}
\end{table}
The Ruth method is a particularly elegant \emph{isolated} solution of
the order conditions $p=1,2,3$. There are also in fact infinitely many
2-split, three-stage, third-order operator-splitting methods
consisting of two one-parameter families~\cite{Hansen2009}. The Ruth
method requires six sub-integrations, and it turns out it is not
possible to achieve third order with fewer sub-integrations. We
comment further on these aspects in~\cref{subsec:optimalLEM}.

\subsection{Local error measure of operator-splitting methods}
\label{subsec:lem}

One way to measure the accuracy of an operator-splitting method is via
the local error measure proposed in \cite{auzinger2016practical}.  It
is shown in \cite{auzinger2016practical} that for a method of order
$p$, the leading local error $\localerror_{p+1}$ can be written as
\begin{equation}
	\label{eq:localerror}
	\localerror_{p+1} = \frac{\Dt^{p+1}}{(p+1)!}\sum\limits_{j=1}^{\gamma_{p+1}} \comcoeff{p+1}{j} \commu{p+1}{j},
\end{equation}
where $\comcoeff{p+1}{j}$ are constants that depend on the splitting
coefficients $\aaalpha{k}{\ell}$ and $\commu{p+1}{j}$ are the
$\gamma_{p+1}$ commutators of $\DOp{1}$ and $\DOp{2}$ of order
$(p+1)$. For example, for an $s$-stage method, the leading local error
$\localerror_{3}$ is
\begin{align*}
	\localerror_3 = & \frac{\Dt^3}{3!} \left\{\left( 3\left(\sum\limits_{i=1}^{s-1} \aaalpha{i}{2}\left(\sum\limits_{k=i+1}^s \aaalpha{k}{1} \right)^2 \right)- 1\right) [\DOp{1},[\DOp{1},\DOp{2}]] \right. \\
	& \left. + \left( 3\left(\sum\limits_{i=1}^{s} \aaalpha{i}{1}\left(\sum\limits_{k=i}^s \aaalpha{k}{2} \right)^2 \right)- 1\right) [[\DOp{1},\DOp{2}],\DOp{2}] \right\}.
\end{align*}
To compare the accuracy of different methods of order $p$, it is
reasonable to consider
\begin{equation*}
	\left( \sum\limits_{j=1}^{\gamma_{p+1}} |\comcoeff{p+1}{j}|^2 \right)^{1/2}. 
\end{equation*}
However, Auzinger et al. \cite{auzinger2016practical} proposed to use
the measure
\begin{equation}
	\label{eq:lem}
	\text{LEM} (p) := \left( \sum\limits_{j=1}^{\gamma_{p+1}} |\lambda_{p+1,j}|^2 \right)^{1/2}, 
\end{equation}
where $\lambda_{p+1,j}$ are coefficients of leading monomials in the
sense of lexicographical order in the expanded commutators. These
leading monomials correspond to the Lyndon words over the alphabet
$\DOp{1}$ and $\DOp{2}$. The advantages of using $\lambda_{p+1,j}$
instead of $\comcoeff{p+1}{j}$ are that the coefficients
$\lambda_{p+1,j}$ are constructed when finding order conditions for
order $p+1$ and they are easier to compute than
$\comcoeff{p+1}{j}$. More details regarding the construction of order
conditions can be found in~\cite{auzinger2016practical}. Because we
focus on third-order methods in this study, we give the LEM of
third-order method explicitly:
\begin{equation}
	\label{eq:lem_3rd}
	\text{LEM}(3)= \left(|\lambda_{4,1}|^2 + |\lambda_{4,2}|^2 + |\lambda_{4,3}|^2\right)^{1/2},
\end{equation}
where 
\begin{align*}
	\lambda_{4,1} &= 4\left[ \sum\limits_{i=1}^{s-1}
	\alpha_i^{[2]}\left(\sum\limits_{k=i+1}^s \alpha_k^{[1]}\right)^3 \right] -1, \\ 
	\lambda_{4,2} &= 6\left[ \sum\limits_{i=1}^{s-1} \left(\alpha_i^{[2]}\right)^2 \left(\sum\limits_{k=i+1}^s \alpha_k^{[1]}\right)^2 + 2\sum\limits_{i=1}^{s-2} \alpha_i^{[2]}\left(\sum\limits_{k=i+1}^{s-1} \alpha_k^{[2]} \left(\sum\limits_{l=k+1}^{s} \alpha_l^{[1]} \right)^2\right)  \right] - 1,  \\ 
	\lambda_{4,3} &= 4\left[  \sum\limits_{i=2}^{s}
	\alpha_i^{[1]}\left(\sum\limits_{k=1}^{i-1} \alpha_k^{[2]}\right)^3  \right] -1.  \\ 
\end{align*}

\begin{rmk}
  We note that the LEM is generally a fairly coarse measure of the
  splitting error. For example, it does not consider the commutators,
  which are distinctly problem-dependent and may vary greatly in
  magnitude.  Moreover, the LEM does not take into account the
  ordering of the operators or the sub-integration methods. These
  factors often have a large impact on the overall performance of an
  operator-splitting method. However, if each
  $\varphi_{\aaalpha{k}{\ell}\Dt}^{[\ell]}$ in~\cref{eq:s-stageOS} is
  approximated using a Runge--Kutta method, the resulting method can
  be written as fractional-step Runge--Kutta method, which can in turn
  be represented as an additive Runge--Kutta (ARK)
  method~\cite{spiteri_wei_FSRK}.  The ARK representation takes into
  account the operator-splitting method, the Runge--Kutta methods used
  for the sub-integrations, and the order in which the operators are
  applied. As shown in~\cite{ketcheson2023}, one can use the
  generalized $B$-series of an ARK method to study the accuracy of the
  ARK method more precisely. The generalized $B$-series analyses the
  operators $\Fub{\ell}$ through their elementary
  differentials. Although beyond the scope of this study, if desired,
  such an approach can be used to refine the LEM calculation for
  a specific problem.
\end{rmk}

\subsection{Operator-splitting methods with optimal LEM}
\label{subsec:optimalLEM}

In order for a two-stage operator-splitting method applied to a
$2$-additive ODE~\cref{eq:2splitode} to achieve third-order accuracy,
the coefficients $\{\aaalpha{k}{\ell}\}_{k=1,2}^{\ell = 1,2}$ must
satisfy the following five equations:
\begin{equation}
	\label{OS23ordercond}
	\left\{
	\begin{aligned} 
		& \aaalpha{1}{1} + \aaalpha{2}{1}  = 1,  \\ 
		& \aaalpha{1}{2} + \aaalpha{2}{2}  = 1,  \\ 
		& \aaalpha{1}{2}\aaalpha{1}{1} + \aaalpha{2}{2}\left(\aaalpha{1}{1} + \aaalpha{2}{1} \right) = \frac{1}{2}, \\ 
		& \aaalpha{1}{2}\left(\aaalpha{2}{1} \right)^2  = \frac{1}{3}, \\ 
		& \aaalpha{1}{1}\left(\aaalpha{1}{2}  + \aaalpha{2}{2} \right)^2 + \aaalpha{2}{1} \left( \aaalpha{2}{2}  \right)^2  = \frac{1}{3}.
	\end{aligned} \right.
\end{equation}

The system \cref{OS23ordercond} has no solution. It is easy to show
that the set of coefficients
$\displaystyle \left\{\aaalpha{1}{1} = \frac{1}{3}, \aaalpha{2}{1} =
  \frac{2}{3}, \aaalpha{1}{2} = \frac{3}{4}, \aaalpha{2}{2} =
  \frac{1}{4} \right\}$ is the unique solution of the first four
equations. However, it fails to satisfy the fifth equation.  Hence, a
third-order, 2-split operator-splitting method \cref{eq:s-stageOS}
must have at least three stages. Therefore, we
start by considering $s=3$.


  Consider a three-stage, third-order, 2-split operator-splitting
  method with coefficients
  $\{\aaalpha{k}{\ell}\}_{k=1,2,3}^{\ell = 1,2}$. Solving the order
  conditions \cref{orders123}, we find the Ruth
  method~(\cref{tab:Ruthcoeff}) as an isolated solution, as well as
  two one-parameter families of solutions. Denoting the free parameter
  by $\OSTTparam$, the two families of solutions can be expressed as
\begin{align*}
	\aaalpha{1}{1} & = 1-\aaalpha{2}{1} - \aaalpha{3}{1}   , & \aaalpha{1}{2} & = 1- \aaalpha{2}{2} - \aaalpha{3}{2},  \\ 
	\aaalpha{2}{1} & = \frac{2\aaalpha{3}{1}\aaalpha{3}{2} - 2\aaalpha{3}{1} + 1}{2\aaalpha{1}{2}}   , & \aaalpha{2}{2} & = 
	\frac{1-\OSTTparam}{2} \pm \frac{\sqrt{144\OSTTparam^4 + 72\OSTTparam^3 - 99\OSTTparam^2 + 30\OSTTparam- 3}}{24\OSTTparam - 6}, \\
	\aaalpha{3}{1} & = -\frac{3\aaalpha{2}{2} + 3\aaalpha{3}{2} - 1}{6\aaalpha{2}{2}(\aaalpha{3}{2}-1)}   , & \aaalpha{3}{2} & = \OSTTparam. 
\end{align*}
For finite values to exist for
$\aaalpha{1}{1}, \aaalpha{3}{1}, \aaalpha{2}{2}$, and
$\aaalpha{3}{2}$, we must have
$\displaystyle \OSTTparam\neq \frac{1}{4}, \frac{1}{3}, 1$.  In
addition, to obtain real-valued solutions, we must have
$\OSTTparam > \displaystyle \frac{1}{4}$ or $\theta < -1.217077796$.
These restrictions on $\OSTTparam$ imply that none of the coefficients
vanish, and hence all three-stage, third-order, 2-split methods with
real coefficients must have six sub-integrations. We restrict our
attention to operator-splitting methods with real coefficients because
they are most widely used in practice.

Minimizing the LEM \cref{eq:lem_3rd} leads to the third-order
palindromic method of the embedded pair {\tt Emb 3/2 AKS}
\cite{auzinger2016practical}. We denote this method as AKS3. The
coefficients are given to 15 decimal places in
\cref{tab:Emb32AKScoeff}.
\begin{table}[htbp]
	\centering
	\caption{Coefficients $\alpha_k^{[\ell]}$ for the AKS3 method}
\begin{tabular}{|c|r|r|}
    \hline 
    \multicolumn{1}{|c|}{$k$} & \multicolumn{1}{c|}{$\displaystyle \alpha_k^{[1]}$} & \multicolumn{1}{c|}{$\displaystyle \alpha_k^{[2]}$}  \\
    \hline 
    1 & $0.268330095673069 $  & $0.919661524555154  $ \\
    \hline 
    2 & $-0.187991620228223 $  & $-0.187991620228223 $\\
    \hline 
    3 & $0.919661524555154  $  & $0.268330095673069 $ \\
    \hline 
\end{tabular}
	\label{tab:Emb32AKScoeff}
\end{table}


In~\cref{sec:beat_ruth_main}, we derive a method with optimized LEM by
adding an extra stage but with a minimal amount of increased
computation, i.e., only one extra sub-integration.  Then
in~\cref{sec:beat_ruth_num_exp}, we evaluate the performance of these
methods on the Niederer benchmark problem.

\subsection{Linear stability of the FSRK methods}
\label{subsec:linear_stab}

Some problems are stability-constrained, i.e., the size of the time
step used to solve a set of ODEs is governed by considerations of
stability and not accuracy. Accordingly, if an FSRK method is used to
solve a stability-constrained problem, it may be possible to improve
performance by enhancing its linear stability.

We note that because $z^{[1]}$ and $z^{[2]}$ in \cref{eq:fsrk_stab}
are independent variables, it is impossible to plot the traditional
linear stability function \cref{eq:fsrk_stab} in the complex
plane. However, in the case of reaction-diffusion problems, the
Jacobians of each operator with respect to the solution $\yub$ are
simultaneously diagonalizable in a neighbourhood of the
solution. Hence, we can scale $z^{[1]}$ or $z^{[2]}$ based on the
relative size of the eigenvalues of the two operators.  Henceforth, we
specialize our discussion to reaction-diffusion systems, noting that
similar analysis is applicable to other simultaneously diagonalizable
systems.

Let $\lambda^{[D]}$ and $\lambda^{[R]}$ be the most negative real
eigenvalues of the Jacobians of the diffusion operator $\Fub{D}$ and
reaction operator $\Fub{R}$, respectively. If the diffusion operator
is considered as operator $1$, then let
$\displaystyle z^{[1]} = \frac{\lambda^{[D]}}{\lambda^{[R]}}
z^{[2]}$. Denoting $z^{[2]}$ by $z$, the linear stability function
\cref{eq:fsrk_stab} can be written as the single-variable function
\begin{equation}
	\label{eq:fsrk_stab_single_var_dr}
	R_{DR}\left(z\right) = \prod\limits_{k=1}^s \stabfunc{k}{D} \left( \frac{\lambda^{[D]}}{\lambda^{[R]}} \aaalpha{k}{1} z \right) 
	\stabfunc{k}{R} \left(\aaalpha{k}{2} z\right),
\end{equation}
where $\stabfunc{k}{D}, \stabfunc{k}{R}$ are the stability functions of the
Runge--Kutta methods applied to the diffusion and reaction operators
at stage $k$, respectively.  If the reaction operator is considered as
operator $1$, then let
$\displaystyle z^{[2]} = \frac{\lambda^{[D]}}{\lambda^{[R]}}
z^{[1]}$. Denoting $z^{[1]}$ by $z$, the linear stability function
\cref{eq:fsrk_stab} can be written as the single-variable function
\begin{equation}
	\label{eq:fsrk_stab_single_var_rd}
	R_{RD}\left(z\right) = \prod\limits_{k=1}^s \stabfunc{k}{R} \left( \aaalpha{k}{1} z \right) 
	\stabfunc{k}{D} \left(\frac{\lambda^{[D]}}{\lambda^{[R]}} \aaalpha{k}{2} z\right), 
\end{equation}
where we note that in both
\cref{eq:fsrk_stab_single_var_dr,eq:fsrk_stab_single_var_rd}, the
variable $z = \lambda^{[R]} \Dt$. 
\begin{thm}
  Let $\{\aaalpha{k}{\ell}\}_{k=1,\dots,s}^{\ell=1,2}$ be the
  coefficients of an operator-splitting method $\Psi$ and let
  $\{\adjaaalpha{k}{\ell}\}_{k=1,\dots,s}^{\ell=1,2}$ be the
  coefficients of the adjoint method $\Psi^\ast$.
  Let $RK_k^{[D]}$ ($RK_k^{\ast [D]}$) and $RK_k^{[R]}$
  ($RK_k^{\ast [R]}$) be the Runge--Kutta methods used to solve the
  diffusion and reaction operator, respectively, at stage $k$ of the
  operator-splitting method $\Psi$ ($\Psi^\ast$). We further assume
  that $RK_k^{[D]} = RK_{s-k+1}^{\ast [D]}$ and
  $RK_k^{[R]]} = RK_{s-k+1}^{\ast [R]}$, for all $k=1,2,\dots, s$.  Then
  the stability function of solving a problem in the order of
  diffusion-reaction with the method $\Psi$ is the same as the
  stability function of solving the problem in the order of
  reaction-diffusion with the method $\Psi^\ast$.
  \label{thm:order_adj}
\end{thm}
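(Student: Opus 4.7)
The plan is to write out the adjoint $\Psi^\ast$ explicitly in the canonical form of~\cref{eq:s-stageOS}, read off its coefficients $\adjaaalpha{k}{\ell}$ and its stage-indexed stability functions, and then show that a change of summation index collapses~\cref{eq:fsrk_stab_single_var_rd} applied to $\Psi^\ast$ (viewed in reaction-diffusion order) into~\cref{eq:fsrk_stab_single_var_dr} applied to $\Psi$ (viewed in diffusion-reaction order). Once the adjoint coefficients are correctly identified, the theorem reduces to reindexing a commutative finite product.

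Starting from $\Psi^\ast_{\Dt}=\Psi_{-\Dt}^{-1}$, the identity $(\varphi\circ\psi)^{-1}=\psi^{-1}\circ\varphi^{-1}$ together with $(\varphi^{[\ell]}_{-t})^{-1}=\varphi^{[\ell]}_{t}$ yields
\begin{equation*}
\Psi^\ast_{\Dt} = \varphi^{[1]}_{\aaalpha{1}{1}\Dt}\circ\varphi^{[2]}_{\aaalpha{1}{2}\Dt}\circ\cdots\circ\varphi^{[1]}_{\aaalpha{s}{1}\Dt}\circ\varphi^{[2]}_{\aaalpha{s}{2}\Dt}.
\end{equation*}
In $\Psi$, operator $[1]$ is diffusion and operator $[2]$ is reaction; in $\Psi^\ast$ viewed with the reaction-diffusion ordering, operator $[1]$ is reaction and operator $[2]$ is diffusion. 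Parsing the composition above into canonical stages of the form $\varphi^{[2]}\circ\varphi^{[1]}$ (the left flow applied second), the $k$-th stage of $\Psi^\ast$ matches the $(s-k+1)$-th stage of $\Psi$ with the two operators interchanged, which forces $\adjaaalpha{k}{1}=\aaalpha{s-k+1}{2}$ and $\adjaaalpha{k}{2}=\aaalpha{s-k+1}{1}$. The hypotheses $RK_k^{[D]}=RK_{s-k+1}^{\ast[D]}$ and $RK_k^{[R]}=RK_{s-k+1}^{\ast[R]}$ likewise identify the diffusion and reaction stability functions of $\Psi^\ast$ at stage $k$ as $\stabfunc{s-k+1}{D}$ and $\stabfunc{s-k+1}{R}$, respectively.

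Inserting these identifications into~\cref{eq:fsrk_stab_single_var_rd} applied to $\Psi^\ast$ and reindexing with $j=s-k+1$, the commutativity of the finite product gives
\begin{equation*}
R^{\ast}_{RD}(z) = \prod_{j=1}^{s}\stabfunc{j}{D}\!\left(\frac{\lambda^{[D]}}{\lambda^{[R]}}\aaalpha{j}{1}z\right)\stabfunc{j}{R}\bigl(\aaalpha{j}{2}z\bigr),
\end{equation*}
which is precisely $R_{DR}(z)$ from~\cref{eq:fsrk_stab_single_var_dr}. The main obstacle I anticipate is not any computation but purely the bookkeeping flagged in the second paragraph: carefully justifying that reversing the composition order of $\Psi$ and reparsing the result into the canonical form for the opposite operator ordering produces the stage-reversal-plus-label-swap identification $(\adjaaalpha{k}{1},\adjaaalpha{k}{2})=(\aaalpha{s-k+1}{2},\aaalpha{s-k+1}{1})$, rather than some other permutation of indices and operator labels. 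Once this identification is nailed down, every remaining step is immediate.
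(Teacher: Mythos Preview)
Your proof is correct and follows the same route as the paper: identify the adjoint relation $\adjaaalpha{k}{1}=\aaalpha{s-k+1}{2}$, $\adjaaalpha{k}{2}=\aaalpha{s-k+1}{1}$, then use commutativity of the product in~\cref{eq:fsrk_stab} to match~\cref{eq:fsrk_stab_single_var_rd} for $\Psi^\ast$ with~\cref{eq:fsrk_stab_single_var_dr} for $\Psi$. The paper's version is terser---it simply asserts the adjoint-coefficient identities and notes that the two products then coincide as multisets of factors---whereas you spell out the derivation of $\Psi^\ast$, the reparsing into canonical stages, and the explicit reindexing $j=s-k+1$; the ``bookkeeping obstacle'' you flag is exactly the one step the paper takes for granted.
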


\begin{proof}
	
  Because $\Psi$ and $\Psi^\ast$ are adjoints,
  $\aaalpha{k}{1} = \adjaaalpha{s-k+1}{2}$, and
  $\aaalpha{k}{2} = \adjaaalpha{s-k+1}{1}$. Therefore,
  $\{\aaalpha{k}{1}\}_{k=1,2,\dots,s} =
  \{\adjaaalpha{k}{2}\}_{k=1,2,\dots,s}$ and
  $\{\aaalpha{k}{2}\}_{k=1,2,\dots,s} =
  \{\adjaaalpha{k}{1}\}_{k=1,2,\dots,s}$, and hence the stability
  function \cref{eq:fsrk_stab_single_var_dr} of solving a problem in the order of
  diffusion-reaction using the method $\Psi$ is identical to the
  stability function \cref{eq:fsrk_stab_single_var_rd} of solving a problem in
  the order of reaction-diffusion using the adjoint method
  $\Psi^\ast$.

\end{proof}

Strictly speaking, analysis based on~\cref{eq:fsrk_stab_single_var_dr} is
for a \emph{scalar} ODE. In such a case, it is useful to think of
eigenvalues $\lambda^{[D]}$ and $\lambda^{[R]}$ as being given (from
the problem) and then the stability of a given method can be analyzed
as the stepsize $\Dt$ (and hence $z$) is varied. However, this
analysis cannot be directly applied to the \emph{system} of ODEs that
arises from the method of lines. For these problems, a given spatial
discretization leads to a distribution of eigenvalues that will all be
present when solving the ODEs. Accordingly, standard plots of
stability regions need to be interpreted more in the sense of
extremes, as described below.

The most celebrated way to solve a 2-split ODE~\cref{eq:2splitode} is
the IMEX approach, i.e., to treat one operator implicitly and the
other explicitly.  \Cref{fig:explain_relevant_region} gives two
examples of stability regions for FSRK methods that use one implicit
and one explicit sub-integrator. \cref{fig:explain_relevant_region1}
depicts a stability region that consists of multiple disjointed
parts. In this case, only the part of the stability region that
contains the origin (shaded) is relevant to the stability of the
method. Then for example, assuming all eigenvalues are real and
negative, the largest stable stepsize can be estimated from
$\lambda\, \Dt \approx -5.8$, where $\lambda$ is the most negative
eigenvalue of the operators. The other part (not shaded) is not
relevant in practice for method-of-lines ODEs.  A stable stepsize
cannot be determined from $\lambda\, \Dt \approx -8.8$ because there
will generally be $z$ values (combinations of $\lambda\, \Dt$ for
given $\Dt$) that land in the unstable region between the two regions
and hence lead to instability of the overall method.

\cref{fig:explain_relevant_region2} depicts a stability region that
has a \emph{hole of instability}.  The potential existence of such
holes can be understood as follows. We recall that the linear
stability function 
of an implicit Runge--Kutta method is a rational function and hence
has poles. For normal (forward-in-time) integration, these poles are
usually in the right-hand complex plane. However, when integrating
backwards in time, these poles are now in the left-hand complex plane
and manifest as holes of instability when they occur inside a
stability region. Accordingly, in the presence of dominant negative
real eigenvalues, a method's practical stability is limited by the
right-most negative $x$-intercept of $|R(z)| = 1$ in the left-hand
complex plane.  More formally, the practical stability region of an
FSRK method applied to a method-of-lines system of ODEs is the
intersection of $|R(z)| \le 1$ that contains the origin and
$\{ z = x+iy ~|~ \xmaxintercept \leq x \leq 0 \} $, where
$\xmaxintercept$ is the right-most negative
$x$-intercept of
$|R(z)| = 1$.

\begin{figure}[!htbp]
	\begin{subfigure}{\textwidth}
		\centering
		\includegraphics[width=\textwidth]{./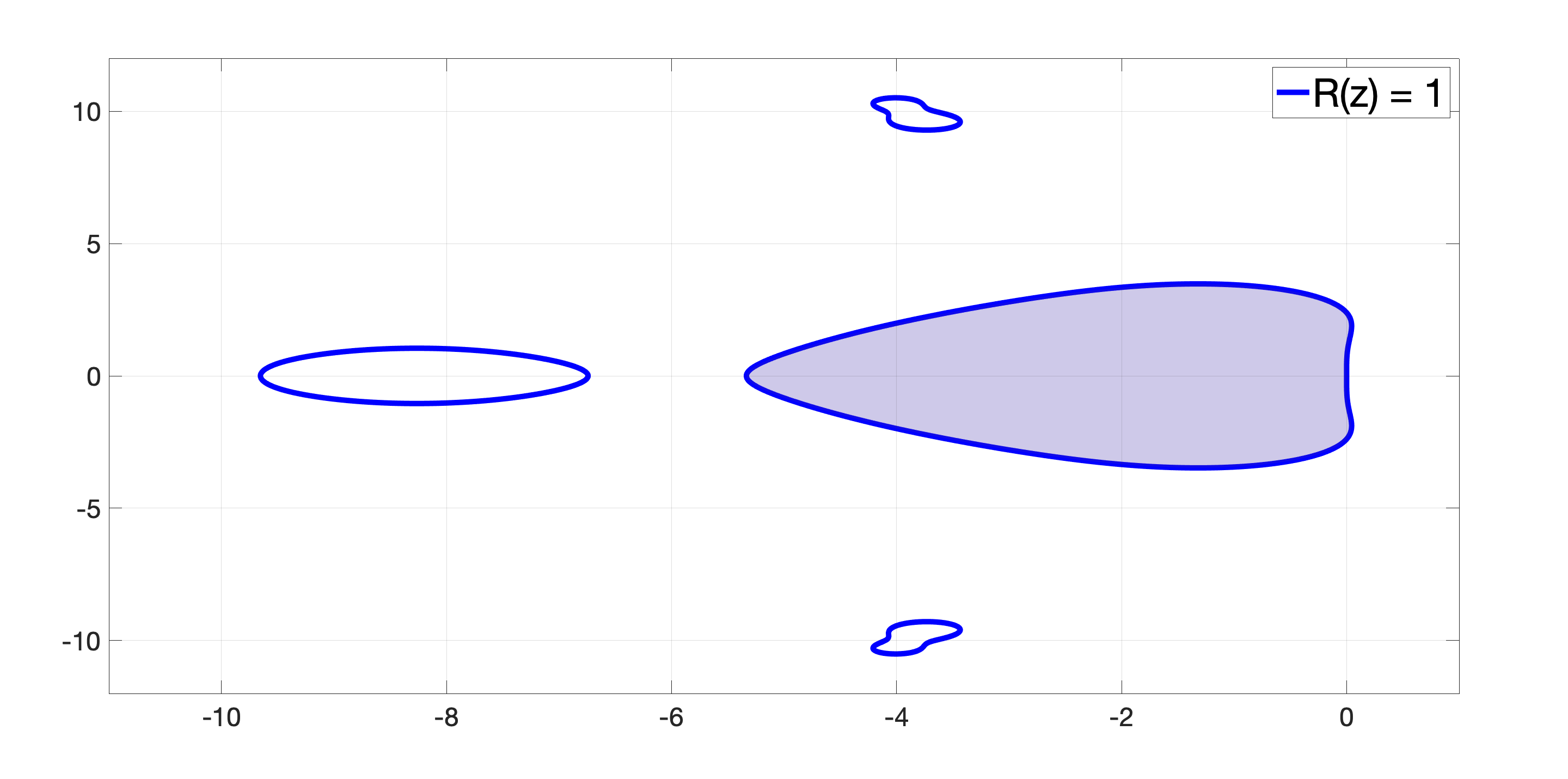}	
		\caption{}
		\label{fig:explain_relevant_region1}
	\end{subfigure}%
	
	\begin{subfigure}{\textwidth}
		\centering
		\includegraphics[width=\textwidth]{./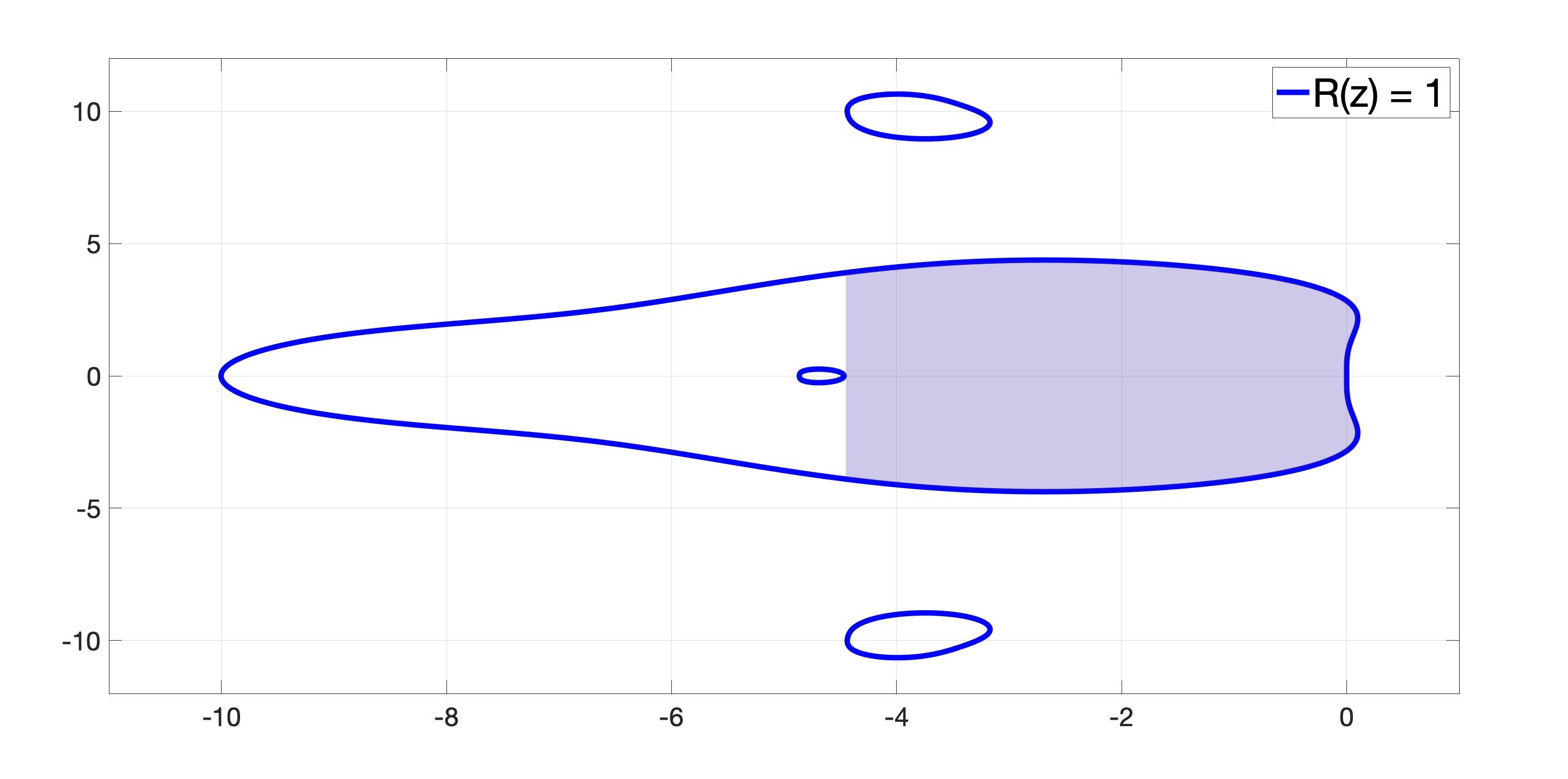}	
		\caption{}
		\label{fig:explain_relevant_region2}
	\end{subfigure}
	\caption{Examples of stability regions of FRSK methods that use
      one implicit and one explicit sub-integrator. Sub-regions
      relevant to stability of the method in practice when solving
      method-of-lines ODEs are shaded.
      \cref{fig:explain_relevant_region1}: When a stability region
      consists of multiple disjointed parts, only the part that
      contains the origin is relevant to the stability of the method
      in practice. \cref{fig:explain_relevant_region2}: If a hole of
      instability is present in the part of the stability region
      containing the origin, the right-most $x$-co-ordinate of the
      hole determines the largest stable step size in the presence of
      dominant negative real eigenvalues. }
	\label{fig:explain_relevant_region}
\end{figure}

Theoretically, one can choose the sub-integration methods with full
flexibility to optimize the stability region. For example, we often
solve the stiff operator with an implicit method and the non-stiff
operator with an explicit method. However, stability is not the only
consideration when choosing a sub-integration method in practice;
e.g., the computational cost of the non-linear solve may tip the
balance in favor of an explicit sub-integration method. Therefore, for
a given problem, we may choose the sub-integration method based on
computational instead of stability considerations. We then focus on
the choice of operator-splitting coefficients to improve the stability
of the overall FSRK method.

Based on this analysis, we propose two strategies to
mitigate the instability due to backward-in-time integration and at
the same time reduce computational cost per step:
\begin{enumerate}
\item Choose operator-splitting coefficients and operator ordering so
  that the right-most negative $x$-intercept of $|R(z)| = 1$ is
  located far from the origin. As noted by \cref{thm:order_adj},
  changing the operator ordering is equivalent to solving the original
  operator ordering problem with the adjoint method.
  
\item Use an explicit Runge--Kutta method for backward-in-time
  sub-integrations. In this scenario, the idea is that an explicit
  Runge--Kutta method improves stability by removing the pole
  completely (because its stability function is a polynomial), and it
  also improves efficiency because explicit methods are generally less
  computationally expensive than implicit ones.
\end{enumerate}
The effects of these strategies are demonstrated
in~\cref{sec:beat_ruth_num_exp}.

 
\section{Main results}
\label{sec:beat_ruth_main}

In his 1968 paper~\cite{Strang1968}, Strang mentioned three
fundamental criteria when comparing numerical methods: accuracy,
simplicity, and stability. In addition to the order of accuracy, we
can characterize the accuracy of a method for a given order based on
the local error measure \cite{auzinger2016practical}.  Simplicity can
be interpreted as the complexity of a method, or in this case, the
number of sub-integrations required. Thus, it is associated with the
computational cost of a method. The linear stability function can be
used to characterize the stability of a fractional-step Runge--Kutta
method~\cite{spiteri_wei_FSRK}. In this section, we derive third-order
operator-splitting methods by optimizing the linear stability region
and local error measure. We include the simplicity (in the sense of
computational cost) of the method in our analysis and discuss the
effect of the operator ordering on the stability of the
method. 
Accordingly, we minimize the LEM or $\hat{x}$ for a given number of stages
while minimizing as well the number of sub-integrations. We denote an
$\Nop$-split operator-splitting method with $s$ stages, order $p$, and
$f$ sub-integrations by \osNspf{\Nop}{s}{p}{f}.

\subsection{\osNspf{2}{4}{3}{7} method with optimized LEM}
\label{subsec:LEM_os}
To improve upon the LEM of the methods presented
in~\cref{subsec:optimalLEM} with minimal additional computational
cost, we consider four-stage, third-order, 2-split methods with 7
sub-integrations; i.e., one of the coefficients
$\{\aaalpha{k}{\ell}\}_{k=1,2,3,4}^{\ell=1,2}$ vanishes. Determination
of such methods is performed as follows. For each
$\aaalpha{k}{\ell}=0$, $k=1,2,3,4 $, $\ell=1,2$, we minimize the LEM
\cref{eq:lem_3rd} using {\tt MATLAB}'s {\tt GlobalSearch} algorithm
starting from 50 random seeds. From the candidate methods generated,
we found two methods with the minimal LEM 
that are also adjoints of each other. We refer to one of these optimum
methods as \osftsminLEM. We note that there is no loss of generality
through the choice made at this point because we have yet to specify
operator ordering. The coefficients of \osftsminLEM\ are given to 15
decimal places in \cref{tab:osftscoeff}.
\begin{table}[htbp]
	\centering
	\caption{Coefficients $\alpha_k^{[\ell]}$ of the \osftsminLEM~method}
\begin{tabular}{|c|r|r|}
    \hline 
    \multicolumn{1}{|c|}{$k$} & \multicolumn{1}{c|}{$\displaystyle \alpha_k^{[1]}$} & \multicolumn{1}{c|}{$\displaystyle \alpha_k^{[2]}$}  \\
		\hline 
		1 & $0.675603619637542 $ & $  1.351207213243766$ \\
		\hline 
		2 & $-0.175603577692365 $ & $  -1.702414383919316$\\
		\hline 
		3 & $-0.175603614267295$ & $    1.351207170675550$ \\
		\hline 
		4 & $0.675603572322118  $ &  \multicolumn{1}{c|}{$0$}  \\
		\hline 
	\end{tabular}
	\label{tab:osftscoeff}
\end{table}


The required number of sub-integrations and the LEM (rounded to two
decimal places) for Ruth, AKS3, and \osftsminLEM~are given in
\cref{tab:comp_method}. Although the \osftsminLEM~method requires one
more sub-integration than Ruth and AKS3, it may have better general
accuracy properties given its comparatively small LEM.  Hence, in
principle, it may be able to overcome the added expense of an extra
sub-integration per step by being able to take larger steps while
maintaining the same accuracy and achieve improved
performance. However, it turns out that the Niederer benchmark problem
considered in~\cref{sec:beat_ruth_num_exp} is too
stability-constrained for this to be the case; see
also~\cref{fig:osfts_DR_RD} below.
\begin{table}[htbp]
	\centering
	\caption{Summary of the properties of Ruth, AKS3, and \osftsminLEM. }
	\begin{tabular}{|c|c|c|}
		\hline 
		Method & $\#$ of sub-integrations &  LEM  \\
		\hline 
		Ruth & $6$ & $0.36$ \\
		\hline 
		AKS3  & $6$ & $0.25$ \\ 
		\hline 
		\osftsminLEM  & $7$ & $6.55 \num{1e-8}$ \\
		\hline 
	\end{tabular}
	\label{tab:comp_method}
\end{table}


\subsection{Third-order, 2-split methods with optimized
  linear stability}
\label{subsec:linear_stab_os}

As is shown in \cref{subsec:linear_stab}, given the most negative real
eigenvalues $\lambda^{[D]}$ and $\lambda^{[R]}$ of each operator and
the Runge--Kutta sub-integrators for each operator at each stage, the
stability functions \cref{eq:fsrk_stab_single_var_dr} or \cref{eq:fsrk_stab_single_var_rd} of an FSRK method
depends the operator-splitting coefficients
$\displaystyle \{ \alpha_k^{[\ell]}\}_{k=1,2,\dots,s}^{\ell=1,2}$. 
Fixing an operator ordering, we can achieve better stability,
especially for problems with strictly real eigenvalues, by designing a
method that minimizes $\xmaxintercept$, the right-most negative
$x$-intercept of $|R(z)| =1$.  When the Jacobians of the operators are
simultaneously diagonalizable, this value is a good predictor of the
largest stable step size in practice.
We note that the adjoint of the optimal method can be used to solve
the problem with the reversed operator ordering as indicated by
\cref{thm:order_adj}.

Because we are interested in third-order operator-splitting methods
with one implicit and one explicit sub-integrator, we use the
two-stage, third-order SDIRK method with
$\gamma = \displaystyle \frac{3+\sqrt{3}}{6}$ (SDIRK(2,3))
\cite{HairerNorsettWanner1993} and Kutta's third-order explicit method
(RK3) \cite{kutta1901}.
%
%
The stability functions of SDIRK(2,3) and RK3 are given by
\begin{align*}
	\StabFunc{\text{SDIRK(2,3)}}(z) & =  1 - \frac{z^2(2\gamma-1)}{2(\gamma z-1)^2} - \frac{z}{\gamma z-1}, \quad \gamma =  \frac{3+\sqrt{3}}{6}, \\
	\StabFunc{\text{RK3}}(z) & =  1 +z + \frac{z^2}{2} + \frac{z^3}{6}. 
\end{align*}

The quantity $\xmaxintercept$ is
minimized using the {\tt MATLAB} {\tt GlobalSearch} algorithm with
$100$ random initial guesses for each case. 
We restrict the design
space of each operator-splitting coefficient to the interval $[-1,1]$
because excessively large sub-steps are generally
undesirable. Furthermore, backward sub-steps may be unstable, and
accordingly, their lengths should generally be minimized.  The
operator-splitting method that gives the most negative
$\xmaxintercept$ among the all candidates is then selected as the
optimal method for the particular operator ordering.


In~\cref{sec:beat_ruth_num_exp}, we use the Niederer benchmark problem
to explicitly demonstrate the performance of \osNspf{2}{3}{3}{6}\ and
\osfts\ methods with designed by optimizing linear stability and then
further enhanced through judicious sub-integration.

 
\section{Numerical experiments}
\label{sec:beat_ruth_num_exp}

The monodomain model is a popular mathematical model for cardiac
electrophysiology~\cite{sundnesbook}.  It consists of a partial
differential equation (PDE) that models the electrical activity in
(co-located) intracellular and extracellular domains of myocardial
tissue, coupled with a system of non-linear ODEs that describe the
current density flowing through the membrane ionic channels. The
monodomain model is a simplification of the bidomain model under the
assumption that the intracellular conductivity is in constant
proportion to the extracellular conductivity. Nonetheless, the
monodomain model has practical utility along with its computational
advantages~\cite{sundnesbook}. Both the monodomain and bidomain models
are computationally expensive because of the high resolution required
from both spatial and temporal discretizations and the increasing size
and complexity of the ionic membrane models. For these reasons,
operator-splitting methods are used to achieve better efficiency and
feasibility over monolithic approaches~\cite{sundnesbook}. Software
libraries for cardiac simulation typically implement the first-order
Lie--Trotter method or the second-order Strang method for the
simulation of the monodomain/bidomain
models~\cite{sundnesbook,Plank2021_openCARP,Cooper2015_Chaste}.
High-order methods have generally been considered unstable due to the
backward-in-time integration. However, recent work has shown this is
not necessarily the case~\cite{spiteri_wei_FSRK}, in particular for
the monodomain and bidomain models~\cite{cervi2018, cervi2018accuracy,
  cervi2019efficiency}. In this section, we demonstrate various ways
to improve the stability and efficiency of high-order
operator-splitting methods to solve a well-known benchmark problem for
the monodomain model.

A benchmark problem was studied in Niederer et
al.~\cite{niederer2011} to verify and compare the performance of
solutions of eleven myocardial tissue electrophysiology simulators. The
benchmark problem consists of a 3D monodomain model described by the
differential equations,
\begin{equation}
	\label{niederer}
	\begin{aligned}
      \chi C_m \pdv{v}{t} + \chi \Iion(\bs,v) &= \nabla \cdot (\sigma \nabla v), \\
      \pdv{\bs}{t} &= \ff(t,\bs,v),
	\end{aligned}
\end{equation}
where $v$ is the transmembrane potential (or \emph{voltage}), $\bs$ is
a set of variables describing the state of the cell, $\chi$ is the
cell surface-to-volume ratio, $C_m$ is the specific membrane
capacitance, and $\sigma$ is the conductivity tensor.  The
transmembrane current density, $\Iion$, is defined by the cell model,
whose states are governed by the ODEs with right-hand side $\ff(t,\bs,v)$.

The Niederer benchmark problem is posed on a cuboid of dimension
$0.3$~cm $\times\ 0.7$~cm $\times\ 2$~cm and time interval $[0,40]$~ms
with a specified initial stimulus at a corner of the domain. The cell
model used is that of ten Tusscher and Panfilov \cite{tt2006}, a cell
model of human epicardial myocytes. This cell model consists of 19
ODEs that model all the major ion channels and intracellular calcium
dynamics. 
Details of the remaining parameters of the problem are given in
\cite{niederer2011}.  To solve the Niederer benchmark problem
numerically, we first discretize the 3D domain using central finite
differences with a spatial mesh of size $\Dx = \Dy = \Dz = 0.05$~cm.
A reference solution was computed by solving the problem with Strang
splitting using the {\tt RK45} method in {\tt
  scipy.integrate.solve\_ivp} with {\tt rtol=1e-3} and {\tt atol=1e-6}
as sub-integrators, and halving the time step until 3 digits matched
across $4305$ equally spaced spatial points and $21$ temporal points
$[0:2:40]$~ms.

To solve the Niederer benchmark problem using operator splitting, we
split the resulting reaction-diffusion system into two operators, 
representing reaction and diffusion, respectively: 
\begin{equation}
	\Fub{R} = \left\{  
	\begin{aligned}
	\pdv{v}{t} & = -\frac{1}{C_m}\Iion(\bs,v), \\
	\dv{\bs}{t} & = \ff(t,\bs,v),
	\end{aligned}
	\right. 
\text{ and } \quad
	\Fub{D} =  \left\{  
	\begin{aligned}
	\pdv{v}{t} & = \frac{1}{\chi C_m} \nabla \cdot (\sigma \nabla v), \\
	\dv{\bs}{t} & = \mathbf{0}.
	\end{aligned}
  \right.
\end{equation}
We solve the Niederer benchmark problem with different
operator-splitting methods and different Runge--Kutta methods as
sub-integrators. To measure the error of an FSRK method, we use the
mixed root-mean-square (MRMS) error~\cite{marsh2012} of quantity $X$
[MRMS]$_X$ at $M = 4305 \times 21$ space-time points defined by
\begin{equation*}
  [\text{MRMS}]_X = \sqrt{\frac{1}{M}
    \sum\limits_{i=1}^M \left(\frac{X_i - X_i^{\text{ref}}}{1+\abs{X_i^{\text{ref}}}} \right)^2},
\end{equation*}
where $X_i$ and $X_i^{\text{ref}}$, respectively, denote the numerical
solution and the reference solution.  Because the transmembrane
potential $v$ is the main variable of interest, we focus on [MRMS]$_v$
for the analysis. In practice, we set a level $[\text{MRMS}]_v=0.05$
as an acceptable error threshold. To examine the efficiency of these
methods, we find the largest constant step sizes $\Dt$ to two
significant figures that yields [MRMS]$_v \le 0.05$. These step sizes
are then used to solve the Niederer benchmark problem over the
interval $[0,40]$.
The simulations were performed using the \pythOS\ operator splitting
library \cite{spiteri_wei_guenter_pythos} on an HP Z820 workstation
running 64-bit Ubuntu 20.04 LTS with Linux kernel 5.4.0-177-generic.
The machine has an Intel Xeon E5-2650 2.0 GHz octa-core CPU and 512 GB
of 1600 MHz DDR3 RAM.  CPU times using are reported as the minimum of
three runs after checking for consistency.

For the Niederer benchmark problem, the most negative eigenvalues of
the Jacobians of the diffusion and reaction operators are
$\lambda^{[D]} \approx -1.92$ 
and $\lambda^{[R]} = -1260$ \cite{spiteri2010}, respectively.  For a
given FSRK method, we form stability functions $R_{DR} (z)$, where the
first operator is the diffusion operation and the second operator is
the reaction operator, and $R_{RD} (z)$, where the first operator is
the reaction operation and the second operator is the diffusion
operator, as given in \cref{eq:fsrk_stab_single_var_dr}.

\subsection{Optimized LEM}

The stability regions of \osftsminLEM\ for the DR and RD orderings are
shown in
\cref{fig:osfts_DR_RD}. Comparing~\cref{fig:osfts_DR_RD,fig:DR_RD_comp},
we see that the stability regions for both DR and RD orderings of
\osftsminLEM\ are comparable to AKS3 but significantly smaller than
those with optimized $\hat{x}$. These results suggest that
\osftsminLEM\ would underperform \osftsDRminx\
derived in \cref{subsect:opt_stab}.  Indeed, numerical experiments
confirm that optimizing LEM does not lead to better performance for
this example. Hence, we do not report on the \osftsminLEM\ method
further.

	\begin{figure}[htbp]
		\centering
		\includegraphics[width=\textwidth]{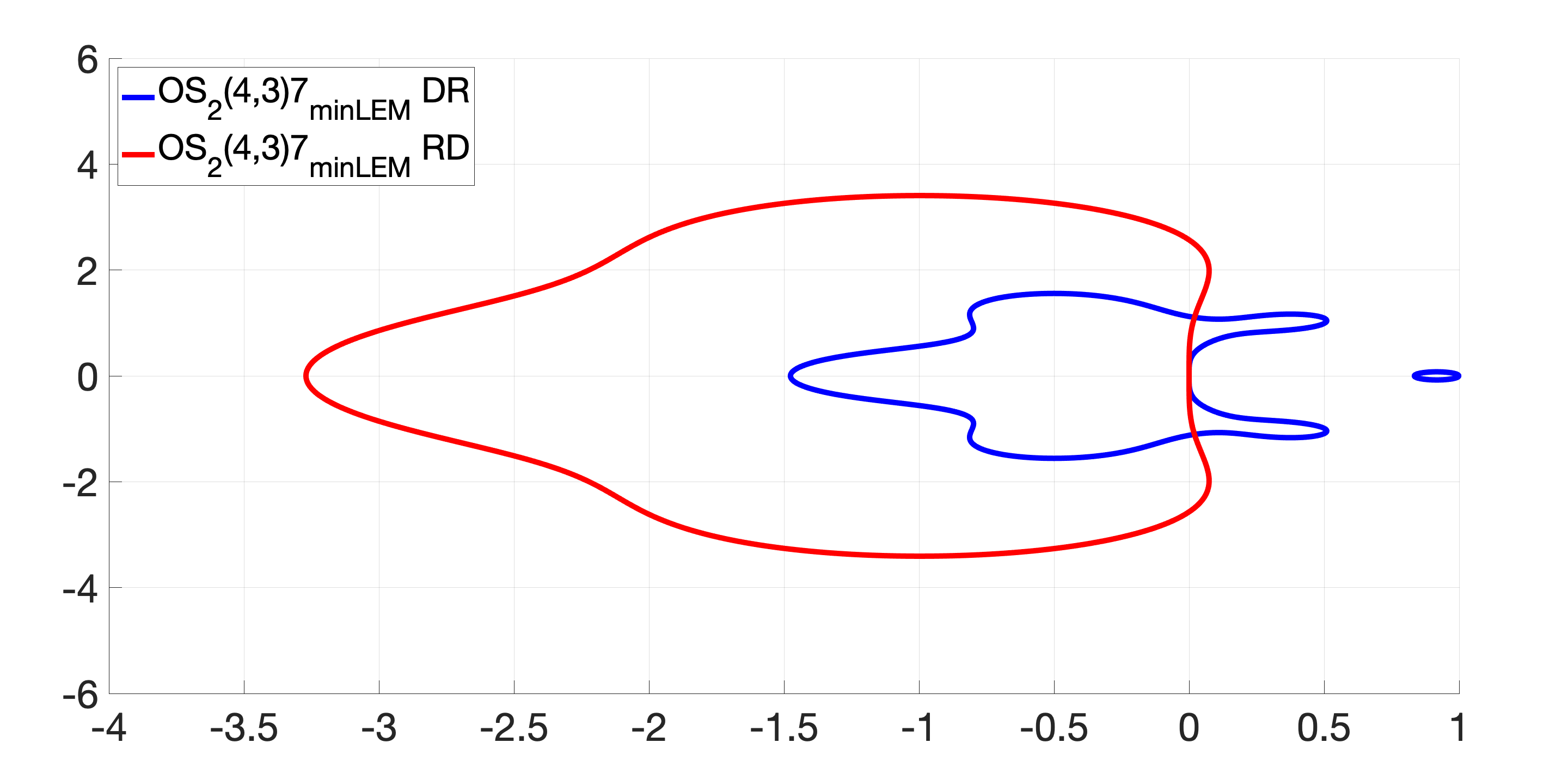}
		\caption{Stability regions of the \osftsminLEM\ method
			applied to the Niederer benchmark problem with different
			operator orderings.}
		\label{fig:osfts_DR_RD}
	\end{figure}

\subsection{Optimized linear stability}
\label{subsect:opt_stab}
For the case of \osNspf{2}{3}{3}{6}\ methods, we find the best result
is a method whose $\xmaxintercept$ is marginally more negative than
that of the Ruth method, which implies that it will allow a slightly
larger step-size than that allowed by the Ruth method. However,
because the computational cost per step for this method is the same as
the Ruth method, it is not expected this method will achieve a
significant improvement in the overall efficiency. Hence, the class of
\osNspf{2}{3}{3}{6}\ methods is not considered further.
		
For the class of \osfts\ methods, the
coefficients of the best method in the DR ordering are given to 15
decimal places in \cref{tab:os437DR_minx}. We denote this method by
\osftsDRminx. The best method with the RD ordering is the adjoint of \osftsDRminx. 

	\begin{table}[htbp]
		\centering
		\caption{Coefficients $\alpha_k^{[\ell]}$ for the \osftsDRminx~method.}
		\begin{tabular}{|c|r|r|}
			\hline 
			\multicolumn{1}{|c|}{$k$} & \multicolumn{1}{c|}{$\displaystyle \alpha_k^{[1]}$} & \multicolumn{1}{c|}{$\displaystyle \alpha_k^{[2]}$}  \\
			\hline 
			1 & \multicolumn{1}{c|}{$0$} & $ 0.214870149852186$ \\
			\hline 
			2 & $0.511486052225367$  & $0.668690687888393$\\
			\hline 
			3 & $-0.501427388979812$  & $-0.041956908041494$ \\
			\hline 
			4 & $0.989941336754445$  & $0.158396070300915$ \\
			\hline 
		\end{tabular}
		\label{tab:os437DR_minx}
	\end{table}

%

\subsection{Operator ordering for optimal stability}

%

As discussed in \cref{subsec:linear_stab}, the order of the operators
should be chosen that the right-most negative $x$-intercept of
$|R(z)| =1$ is far from the origin. In \cref{fig:DR_RD_comp}, we
compare the stability regions of the Ruth, AKS3, and \osftsDRminx\
methods applied to the Niederer benchmark problem with different
operator orderings: diffusion-reaction (DR) and reaction-diffusion
(RD). The interior regions correspond to $|R(z)| < 1$. The stability
plots show that when using the Ruth method, the RD ordering is more
stable, and when using the \osftsDRminx\ method, the DR ordering is
more stable. These orderings suggest that it is preferable to
integrate the stiff reaction operator backward for as short an
interval as possible.

\begin{figure}[htbp]
	\centering
	\includegraphics[width=0.8\textwidth]{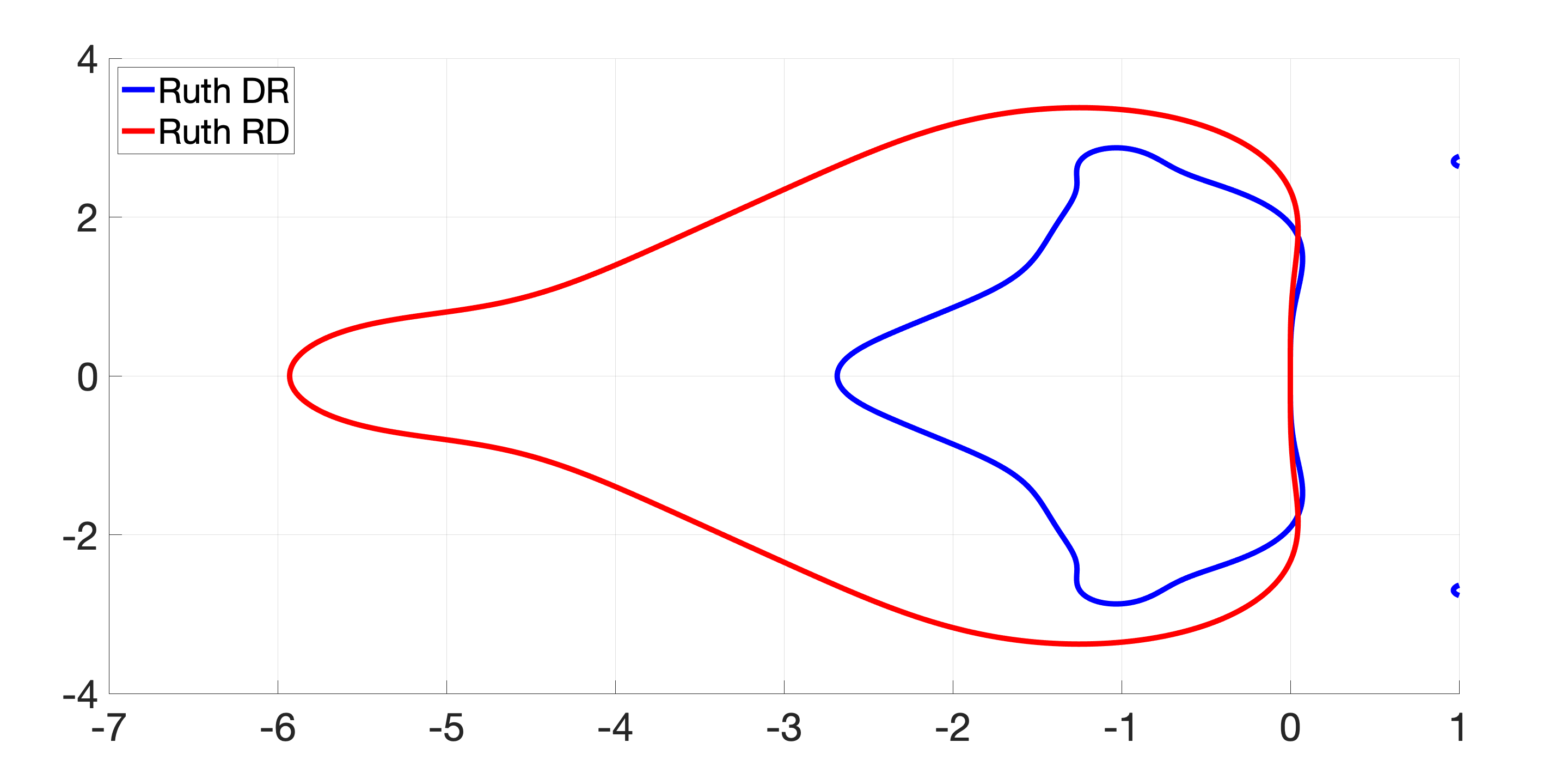}
	\includegraphics[width=0.8\textwidth]{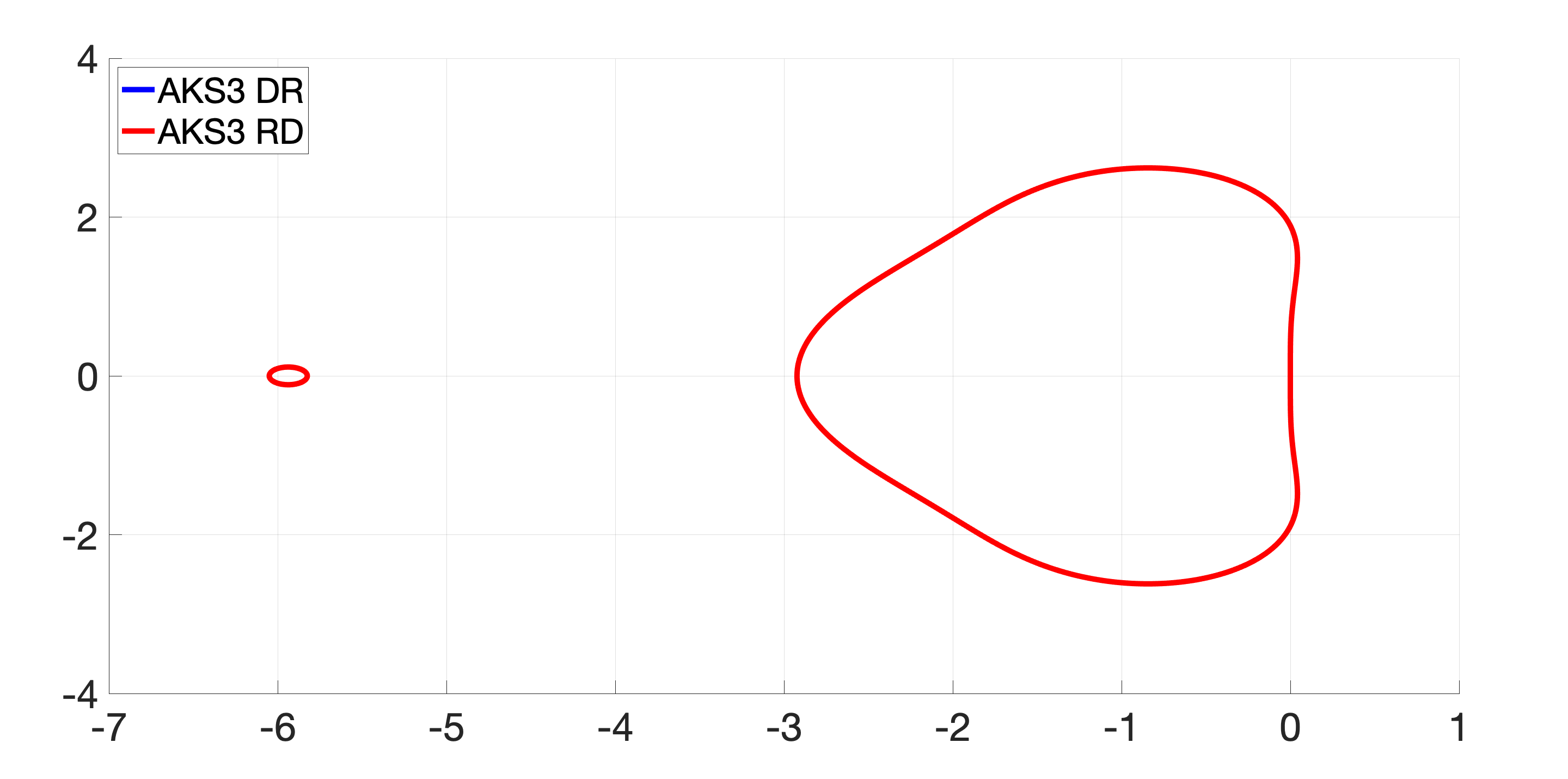}
	\includegraphics[width=0.8\textwidth]{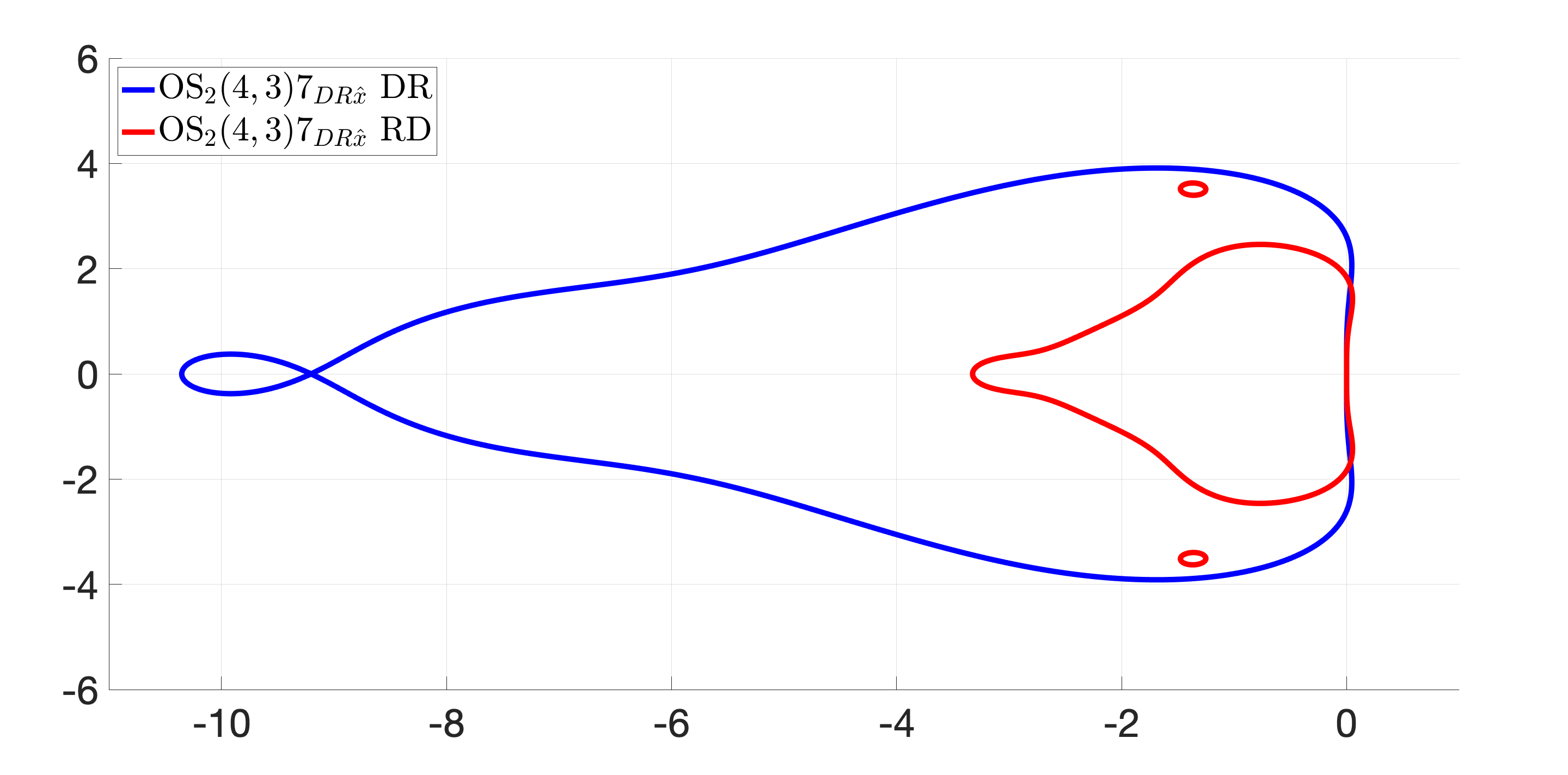}
	\caption{Stability regions of the Ruth, AKS3, and \osftsDRminx\ methods applied to the Niederer benchmark problem
      with different operator orderings.}
  	\label{fig:DR_RD_comp}
\end{figure}

We note that, in this case, no hole of instability appears in the main
stability region; therefore, the stability of the methods is not
dictated by the location of the pole on the left-hand side of the
complex plane. If the stability region is restricted by a hole as in
\cref{fig:explain_relevant_region2}, the operator ordering that places
the pole further to the left normally improves the stability
property~\cite{spiteri_wei_FSRK}.

\subsection{Performance of \osNspf{2}{4}{3}{7} methods}

The CPU times using the Ruth, AKS3, and \osftsDRminx\
methods are reported in \cref{tab:DR_RD_comp}.  The results confirm
the optimal integration orderings suggested in \cref{fig:DR_RD_comp},
i.e., the Ruth method can take a larger stable step size with operator
ordering RD and \osftsDRminx\ can take a larger stable step size with
operator ordering DR. We note that this computation is
stability constrained because the [MRMS]$_v$ errors for the allowed
step sizes are well below $0.05$. The \osftsDRminx\ 
method exhibit a $36\%$ efficiency gain compared to the Ruth method
(in RD order). AKS3 is less stable than the Ruth method for this
problem, and so the advantage of a smaller LEM does not lead to a
practical advantage for this (stability-constrained) problem.

\begin{rmk}
  \cref{fig:DR_RD_comp} suggests that the
  \osftsDRminx\ method applied in the RD ordering, i.e., in the opposite ordering
  for which it was designed, has a much smaller stable step size
  compared to the Ruth method. In other words, this new method is
  inefficient when not used according to its design
  principles. Accordingly, we omit details of this combination in
  \cref{tab:DR_RD_comp}.
	
\end{rmk}

\begin{table}[htbp]
	\centering
	\begin{tabular}{|c|c|c|c|c|c|c|}
		\hline 
		method & \multicolumn{3}{c|}{Diffusion-Reaction} & \multicolumn{3}{c|}{Reaction-Diffusion} \\ 
		\hline 
		& $\Delta t$ & time & MRMS error & $\Delta t$ & time & MRMS error  \\ 
		\hline 
		Ruth  &   0.0028 &	8831 &	0.00067 & 0.0062	& 3962 &	0.00039 \\ 
		\hline 
		AKS3 & 0.0031 &	7654 &	0.023 &	0.0031 &	7969 &	0.022 \\ 
		\hline 
      \osftsDRminx & \textbf{0.011} & \textbf{2509} & \textbf{0.00055} & --- & --- & ---  \\
      \hline 
	\end{tabular}
	\caption{Efficiency comparison of solving the Niederer benchmark
      problem using the Ruth, AKS3, and \osftsDRminx\
      methods. }.
   \label{tab:DR_RD_comp}
\end{table}

\subsection{Use of explicit Runge--Kutta methods for backward-in-time
  integration}

A further way to improve the linear stability of a fractional-step
Runge--Kutta method is to remove any poles in the linear stability
region in the left-hand side of the complex plane. Such poles can be
caused by backward-in-time sub-integration with an implicit
Runge--Kutta method. In such cases, the pole can be removed by
replacing an implicit Runge--Kutta sub-integrator with an explicit
Runge--Kutta sub-integrator when integrating backward in time. Because
the Niederer benchmark problem is stability constrained and not
accuracy constrained for the parameters chosen, we replace the
negative steps in both operators with the forward Euler (FE)
method. Interestingly, when integrating backward in time, the FE
method in fact leads to a method with a larger linear stability region
than Heun's method, which is in turn more stable than Kutta's
third-order method, because, for $z$ values with positive real parts,
$|\StabFunc{\text{FE}}(z)| < |\StabFunc{\text{Heun}}(z)| <
|\StabFunc{\text{RK3}}(z)|$.  Moreover, although use of the FE method
negatively impacts the order of the overall method (and generally the
accuracy of a given numerical solution), it has the least
computational expense per step, and so the increase in error does not
negate the increase in efficiency in this situation.

  As shown in \cref{fig:DR_RD_FE_comp1,fig:DR_RD_FE_comp2}, replacing
  both SDIRK(2,3) and RK3 with FE for the negative steps made slight
  improvement in stability of all the Ruth, AKS3, \osftsDRminx\ methods.
  

  As shown in \cref{tab:DR_RD_FE_comp}, although the use of the
  forward Euler method increased the error in each computation, all
  errors are still acceptable. All methods, moreover, demonstrate
  significant efficiency gains. Although the improvement in stability
  leads to no further improvement in step size in this case, the
  \osftsDRminx\ method is approximately $25\%$
  faster than the classical implementation reported in
  \cref{tab:DR_RD_comp} and $29\%$ faster than the optimal
  implementation of the Ruth method.

\begin{figure}[!htbp]
	\centering
	\includegraphics[width=\textwidth]{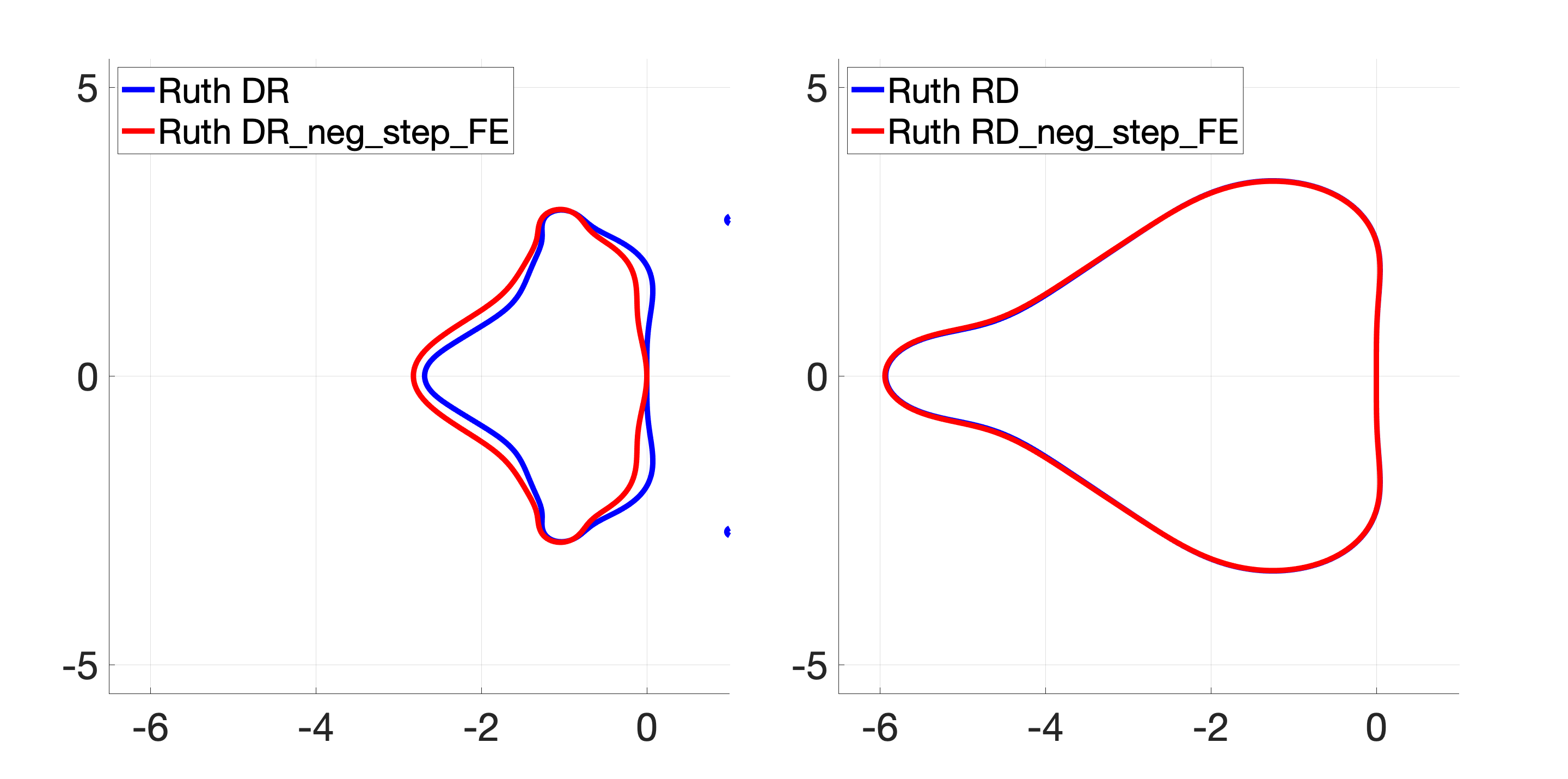}
	\includegraphics[width=\textwidth]{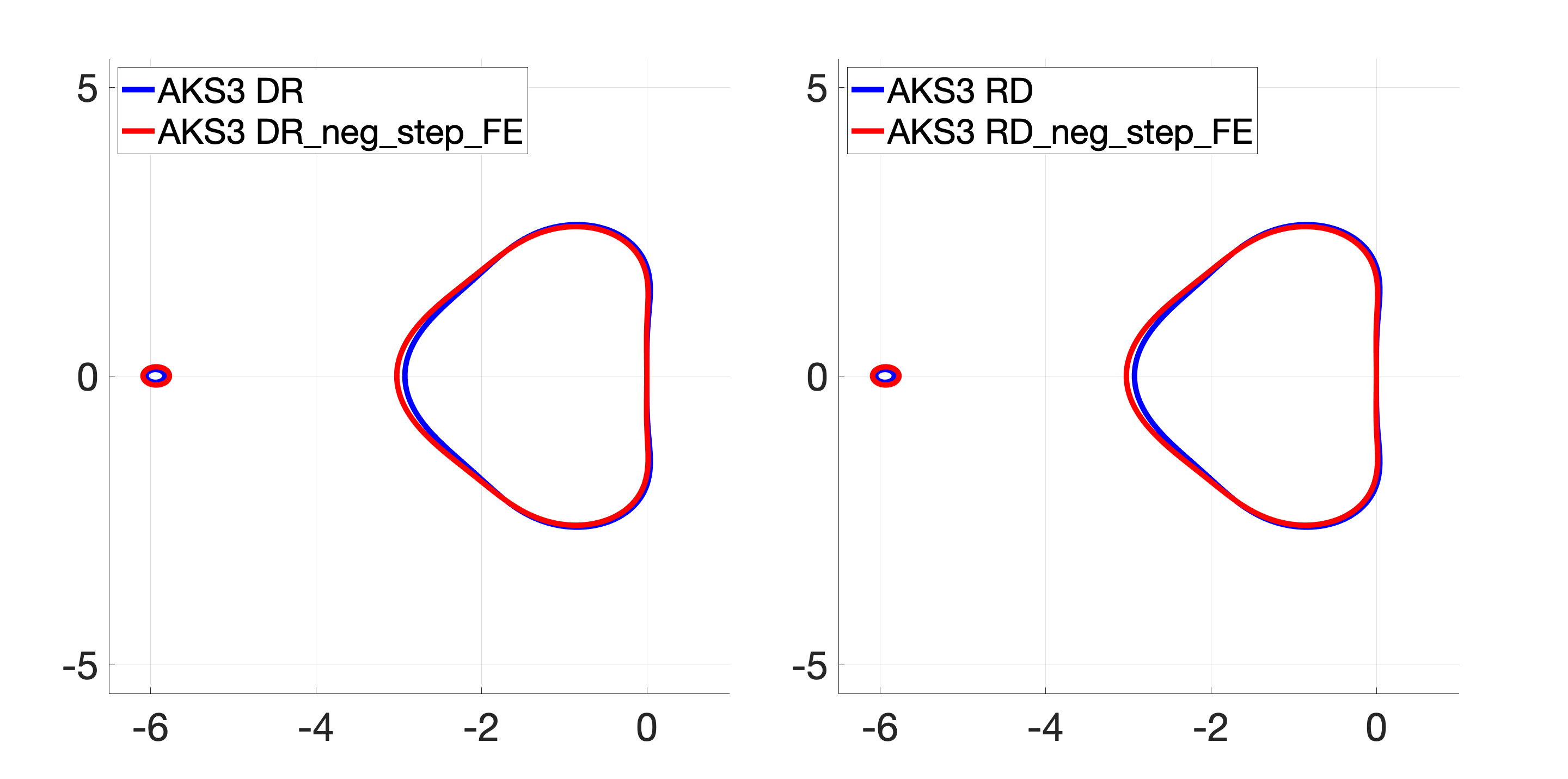}
		\caption{Stability regions of the Ruth and AKS3 applied to
		the Niederer benchmark problem with different operator orderings
		and FE applied to both negative steps.}
	\label{fig:DR_RD_FE_comp1}	
\end{figure}

\begin{figure}[htbp]
	\centering
	\includegraphics[width=\textwidth]{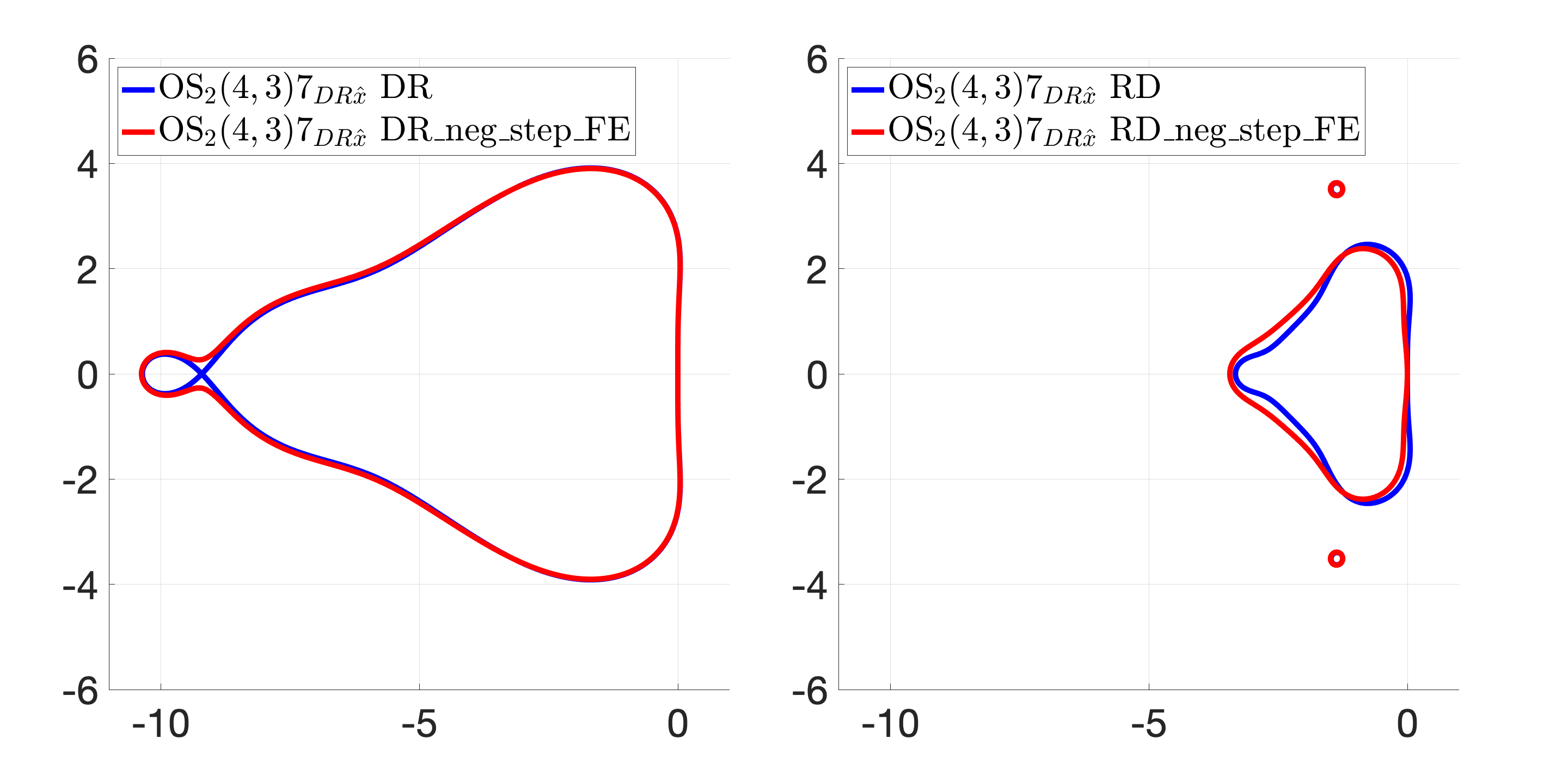}
	\caption{Stability regions of the \osftsDRminx\
      applied to the Niederer benchmark problem with different
      operator orderings and FE applied to both negative sub-integrations.}
  \label{fig:DR_RD_FE_comp2}
\end{figure}

\begin{table}[!htbp]
	\centering
	\begin{tabular}{|c|c|c|c|c|c|c|}
		\hline 
		method & \multicolumn{3}{c|}{DR (FE for both neg)} & \multicolumn{3}{c|}{RD (FE for both neg)} \\ 
		\hline 
		& $\Delta t$ & time & error & $\Delta t$ & time & error  \\ 
		\hline 
		Ruth &  0.0029 &	5752 &	0.021  & 0.0062 & 2678 &	0.041  \\ 
		\hline 
		AKS3 & 0.0031 & 	5338 & 	0.024 & 	0.0031 & 	5290 & 	0.024 \\ 
		\hline 
		\osftsDRminx & \textbf{0.011} & \textbf{1906} & \textbf{0.0414} & --- & --- & ---  \\
		\hline 
	\end{tabular}
	\caption{Efficiency comparison of solving the Niederer benchmark
      problem using the Ruth, AKS3, and \osftsDRminx\
      methods with the DR and RD orderings using forward Euler for
      both backward-in-time sub-integrations.}
    \label{tab:DR_RD_FE_comp}
\end{table}

%
%
%
%
%
%
%



\section{Conclusions}
\label{sec:beat_ruth_conclu}

In this paper, we constructed a new four-stage, third-order, 2-split
operator-splitting method with seven sub-integrations per time step
and improved linear stability and computational efficiency over
existing methods and implementations. The method was designed by
optimizing the linear stability region on the negative real axis while
considering the number and type of sub-integrations. This optimization
represents balancing the tradeoff of stability against computational
expense per step. Moreover, we also proposed two novel implementation
strategies to improve the stability of FSRK methods for a given
problem. First, operator ordering can be chosen to optimize the
stability. Second, replacing an implicit Runge--Kutta sub-integrator
with an explicit Runge--Kutta sub-integrator for backward-in-time
sub-integrations generally improves efficiency and can yield
(sometimes substantial) increases in stability. When low-order
explicit Runge--Kutta sub-integrators are used, the increase in
efficiency may nonetheless offset the increase in error, especially
for stability-constrained problems. For future work, we plan to
further explore high-order operator-splitting methods with an emphasis
on problems where the Jacobians are not simultaneously diagonalizable.


\bibliographystyle{siamplain}
\bibliography{reference}
\end{document}